\documentclass{amsart}

\usepackage{amssymb, amsmath}
\usepackage{amssymb,amsfonts,latexsym}
\usepackage{color}
\usepackage{graphicx}

\newtheorem{theorem}{Theorem}
\newtheorem{definition}{Definition}
\newtheorem{lemma}{Lemma}
\newtheorem{proposition}{Proposition}
\newtheorem{remark}{Remark}
\newtheorem{example}{Example}

\newcommand{\bee}[1]{\begin{equation}\label{#1}}
\newcommand{\beq}[1]{\begin{eqnarray}\label{#1}}
\newcommand{\ene}{\end{equation}}
\newcommand{\eqe}{\end{eqnarray}}

\begin{document}

\title[Irreducible characters and exponential growth]
{Irreducible characters of the symmetric group and exponential growth}

\author[Giambruno]{Antonio Giambruno}\address{Dipartimento di Matematica e
Informatica
\\Universit\`a di Palermo \\ Via Archirafi 34, 90123 Palermo, Italy}
\email{antonio.giambruno@unipa.it}

\author[Mishchenko]{Sergey Mishchenko}\address{Department of Algebra
and Geometric Computations \\
Ulyanovsk  State University \\ Ulyanovsk 432017, Russia}
\email{mishchenkosp@mail.ru}

\keywords{symmetric group, character, exponential growth}

\subjclass[2010]{Primary 20C30, 05A17}

\begin{abstract}
We consider sequences of degrees of ordinary irreducible $S_n$-characters.
We assume that the corresponding Young diagrams have
rows and columns bounded by some linear function of $n$
with leading coefficient less than one.
We show that any such sequence has  at least exponential growth and we compute
an explicit bound.
\end{abstract}
\maketitle

\section{Introduction}

This paper is devoted to the computation of a lower bound for the degree of some
irreducible characters of the symmetric group in characteristic zero.

Let $S_n$ be the symmetric group on $n$ symbols.
Let $\lambda\vdash n$ be a partition of $n$ and $f^\lambda$ the degree
of the irreducible $S_n$-character corresponding to $\lambda$.

We shall consider sequences of partitions $\{\lambda^{(n)}\}_{n\ge 1}$
where $\lambda^{(n)}\vdash n$ and the corresponding sequence of their degrees
$\{f^{\lambda^{(n)}}\}_{n\ge 1}$.
When the sequence of partitions is subject to suitable constraints,
there are several results in the literature about computing an upper bound of the sequence
$\{f^{\lambda^{(n)}}\}_{n\ge 1}$ (\cite{BR1}, \cite{gia-zaiBOOK}, \cite{OM}, \cite{mrz}, \cite{R1}).

For instance,  if each $\lambda^{(n)}= (\lambda^{(n)}_1, \lambda^{(n)}_2, \ldots)$
lies in a strip of fixed  height $k\ge 1$ (i.e., $\lambda^{(n)}_{k+1}=0$), then
$f^{\lambda^{(n)}} \le k^n$, for all $n\ge 1$.
In case the sequence has the further property that $\lambda^{(n)}_1\le \frac{n}{\alpha}$,
for some fixed $\alpha>1$, then an exponential lower bound can also be found.

The above example can be generalized to hooks.
In fact given $k,l\ge 0$, it can be shown that if we consider a sequence
of partitions lying inside the $k\times l$ hook (i.e., $\lambda^{(n)}_{k+1}\le l$),
then $f^{\lambda^{(n)}} \le (k+l)^n$ (\cite{R1}).
A lower bound in this case is found (see Proposition \ref{prop 3} below)
when the sequence has the further property that $\lambda^{(n)}_1, {\lambda^{(n)}_1}' \le \frac{n}{\alpha}$,
for some $\alpha>1$, where ${\lambda^{(n)}}'$ is the conjugate partition of $\lambda^{(n)}$.

Notice that in the above examples all partitions $\lambda^{(n)}$
have bounded diagonal $\delta(\lambda^{(n)})$.
But what can we say if $\delta(\lambda^{(n)})$ goes to infinity with $n$?
Here we shall consider such question.

In case $\lim_{n\to \infty} \frac{\delta({\lambda^{(n)}})^2}{n}= \varepsilon >0$,
then it is not hard to prove that the sequence $\{f^{\lambda^{(n)}}\}_{n\ge 1}$ has overexponential growth.
(see Proposition \ref{prop 2} below).

More generally here we shall prove the following result:
given any $\alpha\in \mathbb{R}$, $\alpha>1$, and
$\{\lambda^{(n)}\}_{n\ge 1}$ a sequence of partitions such that
$\lambda^{(n)}_1, {\lambda^{(n)}_1}' \le \frac{n}{\alpha}$,
then $f^{\lambda^{(n)}} \ge \beta^n$, for any $1< \beta< \alpha$,
for $n$ large enough. This result was proved by the second author
in \cite{M}, in case $\alpha$ is an integer.

A motivation for the study of such sequences of partitions
in found in the theory of algebras satisfying polynomial identities.
For instance as an application of this result (in the integral case)
in \cite{M}  it was proved that no variety of Lie algebras
can have exponential growth between one and two.
Also in \cite{GZ} in order to compute the exponential growth of a variety,
the authors introduced a real valued function $\Phi(\alpha_1, \ldots, \alpha_k)$
with the property that $\Phi(\alpha_1, \ldots, \alpha_k)^n$ asymptotically
equals $f^{\lambda}$, up to a polynomial factor,
where $\lambda=([\alpha_1n],\ldots, [\alpha_kn])$ (see also \cite{gmz1}, \cite{gmz2}).
This function is defined only for partitions lying in a strip of height $k$, while
in this paper no restriction on the height is considered.

\section{Preliminaries}

Throughout the paper we shall consider the ordinary representation theory of the symmetric group.
We refer to \cite{jk} for the notation and the basic notions.

If $\lambda=(\lambda_1, \ldots, \lambda_k)\vdash n$, we tacitely identify $\lambda$
with its Young diagram $D_\lambda = D_\lambda\{(i,j)\}$, where $(i,j)$ is the cell of $D_\lambda$
of coordinates $(i,j)$.
We also write $H_{ij}=\{ (k,l) \in \lambda\mid k\ge i, l=j \ \mbox{or} \
k=i, l\ge j\}$ for the hook of the cell $(i,j)$.
Recall that the hook number of $(i, j)$ is $h_{ij}=(\lambda_i-j)+(\lambda_j'-i)+1$ where
$\lambda_i-j$ is the length of the arm of the hook and
$\lambda_j'-i$ is the length of the leg.

We denote by $\lambda'=(\lambda_1', \ldots, \lambda_{\lambda_1}')$ the conjugate partition of $\lambda$,
where $\lambda_i'$ is the length of the $i$-th column of $\lambda$.

In what follows we shall use the hook formula (see \cite{jk}) and Stirling formula (see \cite{robbins}).

\begin{remark}\label{r1}
Let $\lambda\vdash n$ and let $T_\lambda=(t_{ij})$ be a standard tableau of shape $\lambda .$
If $N_{ij}=n+1-t_{ij},$ then $N_{ij}\ge h_{ij},$ for all $i,j.$
\end{remark}

\begin{proof}
Let $B_{ij} = \{ t_{kl} | t_{kl}\ge t_{ij} \}.$ Clearly $N_{ij}= |B_{ij}|.$
Since for each cell $(k,l)$ of the hook $H_{ij}$ of $(i,j)$ we have that
$t_{kl}\ge t_{ij},$ if follows that $\{ t_{kl} | (k, l)\in H_{ij} \} \subseteq B_{ij}.$
Thus  $N_{ij}\ge h_{ij}.$
\end{proof}

Recall that a corner cell of a diagram is a cell whose hook number is 1.

\begin{definition} If $\lambda \vdash n$ we define the diagonal of $\lambda$
as
$$
\delta(\lambda)= \max_i \{ \lambda_i\ge i \,\,\, and \,\,\, \lambda_i'\ge i \}.
$$
\end{definition}

We remark that $\delta=\delta(\lambda)$ is the size of the largest square diagram inside $\lambda .$

\begin{definition} If $k, l \ge 0$, we define the $k \times l$ hook as the set
$$
H(k, l)= \{ \lambda \vdash n | \lambda_{k+1}\le l \}.
$$
\end{definition}

\section{Two special cases}

\begin{proposition}\label{prop 3}
Let $\alpha\in \mathbb{R},$ $\alpha >1$ and $k\ge l\ge 0$.
If $\lambda \vdash n$ is such that
$\lambda\in H(k, l)$ and $\lambda_1, \lambda_1' \le \frac n \alpha$,
then
$$
f^{\lambda^{(n)}}\ge \frac {\alpha^n}{n^m},
$$
where $m=\frac{(2l+k-1)k}{2}.$
\end{proposition}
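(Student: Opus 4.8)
The plan is to use the hook length formula
$$f^\lambda = \frac{n!}{\prod_{(i,j)\in\lambda} h_{ij}}$$
and bound the product of hook numbers from above. The key observation is that $\lambda \in H(k,l)$ means that outside the first $k$ rows all rows have length at most $l$; equivalently, the cells $(i,j)$ with $i > k$ satisfy $j \le l$. So the cells of $\lambda$ split into two groups: the ``large hook'' cells lying in rows $1,\dots,k$ or columns $1,\dots,l$, and the remaining cells, which form a sub-diagram whose arm lengths and leg lengths are already controlled. For a cell $(i,j)$ with $i > k$ and $j > l$, the arm has length $\lambda_i - j$ and the leg has length $\lambda'_j - i$; since both $\lambda_i \le \lambda_1 \le n/\alpha$ and column heights are $\le \lambda'_1 \le n/\alpha$, and in fact for these interior cells the hook fits inside a region of size roughly $n/\alpha$, one gets $h_{ij} \le n/\alpha$ for all such cells, or more precisely a bound that produces the factor $\alpha^n$ after taking the product over all $n$ cells.

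More concretely, I would proceed as follows. First, count the ``exceptional'' cells — those in the first $k$ rows or first $l$ columns. A cell in the first $k$ rows and beyond the first $l$ columns, together with cells in the first $l$ columns, number at most something like $\tfrac{(2l+k-1)k}{2}$ up to lower-order adjustments; this is exactly where the exponent $m$ comes from. For each of these $O(1)$-many exceptional cells, bound $h_{ij} \le n$ trivially, contributing a factor $n^m$ in the denominator. Second, for the remaining cells I claim $h_{ij} \le \tfrac n \alpha$: indeed using Remark~\ref{r1} is the wrong direction, so instead I argue directly — for $(i,j)$ with $i>k$, $j>l$ the hook $H_{ij}$ is contained in the union of row $i$ (length $\le \lambda_1 \le n/\alpha$) and column $j$ (height $\le \lambda'_1 \le n/\alpha$); a slightly more careful count, removing the doubly-counted corner, gives $h_{ij} = (\lambda_i - j) + (\lambda'_j - i) + 1 \le \lambda_1 + \lambda'_1 \le 2n/\alpha$. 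To sharpen $2n/\alpha$ down to $n/\alpha$ I would instead note that for interior cells the arm plus leg plus the shifts $i,j$ are all bounded by $n/\alpha$ simultaneously, e.g.\ $(\lambda_i - j) + i \le \lambda_i \le n/\alpha$ forces $\lambda_i - j + 1 \le n/\alpha$, and similarly $\lambda'_j - i + 1 \le n/\alpha$; combining these without double-counting yields $h_{ij} \le n/\alpha$ (here one uses $i>k\ge l$ and $j>l$, so the subtracted indices are genuinely positive and large enough to absorb the $+1$). Thus
$$\prod_{(i,j)\in\lambda} h_{ij} \le n^m \left(\frac n\alpha\right)^{n-m},$$
and together with $n! \ge \left(\frac n e\right)^n \cdot \sqrt{2\pi n}$-type estimates — actually we only need $n! \ge (n/e)^n$ or better — we obtain
$$f^\lambda \ge \frac{n!}{n^m (n/\alpha)^{n-m}} \ge \frac{\alpha^{n-m}}{n^m}\cdot\frac{n!}{n^n} \ge \frac{\alpha^n}{n^m}$$
after absorbing the discrepancies into the polynomial factor and using Stirling to beat the $(n/e)^n$ against $n^n$; a small constant adjustment of the exponent $m$ handles the lower-order terms.

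The main obstacle I anticipate is the bookkeeping in the second step: getting the clean bound $h_{ij} \le n/\alpha$ (rather than $2n/\alpha$) for every non-exceptional cell requires being careful about exactly which cells are ``exceptional,'' and the definition of $m = \tfrac{(2l+k-1)k}{2}$ must match the count of those cells precisely — this is the combinatorial heart of the argument. A secondary technical point is reconciling $n!$ with $n^n$ via Stirling so that the resulting overcount is genuinely polynomial in $n$ and can be hidden in $n^m$ (possibly after enlarging $m$ by a bounded amount, or noting the inequality holds for $n$ large and adjusting constants); since the statement asks only for $f^\lambda \ge \alpha^n/n^m$, there is enough slack that the Stirling estimate $n!\ge (n/e)^n\sqrt{2\pi n}$ comfortably absorbs the loss. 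I would write the hook-bound lemma as a separate displayed inequality, then assemble the final estimate in one chain of inequalities.
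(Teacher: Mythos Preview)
Your proposal has two genuine gaps, and both are fatal as written.

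\textbf{First, the decomposition is vacuous.} You correctly note that $\lambda\in H(k,l)$ forces every cell $(i,j)$ with $i>k$ to satisfy $j\le l$. But then your ``exceptional'' set --- cells in the first $k$ rows or the first $l$ columns --- is all of $\lambda$, and your ``remaining'' set of cells with $i>k$ and $j>l$ is empty. The number $m=\frac{(2l+k-1)k}{2}$ is a constant depending only on $k,l$; it is \emph{not} the number of cells in the first $k$ rows or first $l$ columns (that count grows linearly in $n$). In the paper $m$ is the size of the staircase $\mu=(l+k-1,\ldots,l+1,l)$ sitting in the corner of $\lambda$; the cells getting the trivial bound $h_{ij}\le n$ are those in $A=\lambda\cap\mu$, not an entire union of rows and columns.

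\textbf{Second, the Stirling step loses an exponential factor.} Even granting a bound $\prod h_{ij}\le n^m(n/\alpha)^{n-m}$, you would get
\[
f^\lambda \ge \frac{n!}{n^m(n/\alpha)^{n-m}}=\frac{n!\,\alpha^{n-m}}{n^{n}}\sim \sqrt{2\pi n}\,\Bigl(\frac{\alpha}{e}\Bigr)^{n}\alpha^{-m},
\]
which is $(\alpha/e)^n$, not $\alpha^n$; the factor $e^{-n}$ is exponential and cannot be ``absorbed'' into $n^m$ or handled by adjusting $m$ by a bounded amount. Bounding each of $n$ hooks individually by $n/\alpha$ is simply too crude.

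The paper avoids both problems at once. Outside the staircase $\mu$, it does not bound hooks by $n/\alpha$ cell-by-cell; instead it observes that along each of the $k$ arms and $l$ legs of the hook region the hook numbers are bounded by $1,2,\ldots,t_i$, so that $\prod_{(i,j)\notin A}h_{ij}\le \prod_{i=1}^{k+l}t_i!$ with $\sum t_i=n$ and each $t_i\le n/\alpha$. Then
\[
f^\lambda\ge \frac{1}{n^m}\cdot\frac{n!}{\prod t_i!}\ge \frac{1}{n^m}\cdot\frac{n^n}{\prod t_i^{t_i}}\ge \frac{\alpha^n}{n^m},
\]
where the $e$-factors from Stirling cancel because $\sum t_i=n$, and the last step uses $t_i^{t_i}\le (n/\alpha)^{t_i}$. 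The passage through a multinomial coefficient, rather than a uniform per-cell bound, is the essential idea you are missing.
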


\begin{proof}
Let $t_1, \dots , t_{k+l}$ be defined as follows :
\begin{enumerate}
\item[1)]
if $1\le s \le l,$ \, set \  $t_s=\left\{
\begin{array}{ll}\lambda_s' & \text {if }\,\, \lambda_s' >0,
\\
0& \text {otherwise},
\end{array}
\right.
$
\item[2)]
\mbox{
if $1\le s \le k,$ \, set \
$t_{l+s}= \left\{
\begin{array}{ll} \lambda_s-l & \text {if }\,\, \lambda_s-l\ge 0,
\\
0& \text {if }\,\, \lambda_s-l < 0.
\end{array}
\right.
$
}
\end{enumerate}
We remark that $\sum_{i=1}^{k+l}t_i=n.$

Let $\mu=(l+k-1,\dots,l+1, l)\vdash m=\frac{(2l+k-1)k}{2}$ and define
$$
A=\{ (i, j) \mid (i,j)\in \lambda\cap\mu \},
$$
$$
B=\{  (i,j)\in \lambda\setminus\mu \mid i\ge k+1 \},
$$
$$
C=\{  (i,j)\in \lambda\setminus\mu \mid 1\le i \le k \}.
$$

Clearly since $|A|\le m,$
$$
\prod_{(i,j)\in A} h_{ij} \le n^m.
$$
The hook number of a cell in the $j$-th column of $\lambda$, $j\le l$, is $h_{k+i, j}< t_j-i+1,$
$1\le i \le \lambda_j'-k.$
Moreover consider the hook $H_{i, \mu_i+j}$ of a cell of $C$.
Its arm is $\lambda_i-\mu_i-j$ and its leg is $\le k-i$.
Hence
$$
h_{i, \mu_i+j}\le (\lambda_i-\mu_i-j) + (k-i) +1 =
t_{l+i}+l+k-\mu_i-j-i+1= t_{l+i}-j+1,
$$
since $\mu_i=l+k-i$.

 Hence
$$
\prod_{(i, j)\in B \cup C} h_{ij} < \prod_{i=1}^{k+l}t_i!.
$$

It follows that
$$
f^{\lambda}\ge \frac {1}{n^m}\,\, \frac {n!}{\prod_{i=1}^{k+l}t_i!}.
$$

By Stirling formula, recalling that $t_i\le \frac{n}{\alpha},$ we get
$$
\frac {n!}{\prod_{i=1}^{k+l}t_i!}\ge \frac {n^n\prod_{i=1}^{k+l}e^{t_i}}{e^n \prod_{i=1}^{k+l}t_i^{t_i}}\ge
 \frac {n^n}{\prod_{i=1}^{k+l}\frac{n^{t_i}}{\alpha^{t_i}}} =  \prod_{i=1}^{k+l}\alpha^{t_i}= \alpha^n.
$$

Hence we get that
$f^{\lambda}\ge \frac {\alpha^n}{n^m}$ and we are done.
\end{proof}

\begin{remark}\label{r2}
If $k< l$ and $\lambda\in H(k, l),$ then $\lambda'\in H(l, k)$ and $f^\lambda =f^{\lambda'}.$
Hence the conclusion of the previous proposition still holds when $k< l$ and $f^{\lambda}\ge \frac {\alpha^n}{n^m}$
where $m=\frac{(2k+l-1)l}{2}$ in this case.
\end{remark}

The following example illustrates the sets $A, B, C$ of the previous proposition.
\begin{example}
We consider the hook  $H(4,3)$ and a partition $\lambda \in H(4,3)$.
For $\lambda=(9,6,4,2,2,1)\vdash 24$, we have
\end{example}
\vskip.2cm

\centerline{
\includegraphics{alpha2014-pic1.mps}
}

\vskip.5cm
\noindent
{\em where $A$ consists of the white cells, $B$ of the grey cells and $C$
of the marked cells.
In this case we also have that $t_1=6, t_2=5, t_3=3, t_4=6, t_5=3, t_6=1, t_7=0$.}


\begin{lemma}\label{lemma 2}
Let $\{a_s\}_{s\ge 1},$ $\{b_s\}_{s\ge 1}$ be two sequences of natural numbers such that
$$
\lim_{s\rightarrow \infty }a_s=\lim_{s\rightarrow \infty }b_s = \infty
$$
and define the partition $\mu(s)=(b_s^{a_s}),$ $s\ge 1.$

Then for every $\beta>0$ there exists $s_0$ such that for every $s\ge s_0,$  $f^{\mu(s)}> \beta^{a_sb_s}.$
\end{lemma}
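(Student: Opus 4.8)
The plan is to estimate $f^{\mu(s)}$ directly via the hook length formula applied to the rectangular shape $\mu(s)=(b_s^{a_s})$, which has $n = a_s b_s$ cells. For a rectangle the hook lengths are completely explicit: the cell $(i,j)$ with $1\le i\le a_s$, $1\le j\le b_s$ has arm $b_s-j$ and leg $a_s-i$, so $h_{ij}=(b_s-j)+(a_s-i)+1$. In particular every hook length satisfies $h_{ij}\le a_s+b_s-1$. Therefore
$$
f^{\mu(s)} = \frac{(a_sb_s)!}{\prod_{i,j} h_{ij}} \ge \frac{(a_sb_s)!}{(a_s+b_s-1)^{a_sb_s}} \ge \frac{(a_sb_s)!}{(a_s+b_s)^{a_sb_s}}.
$$
Now I would apply Stirling's formula to the numerator: $(a_sb_s)! \ge \left(\frac{a_sb_s}{e}\right)^{a_sb_s}$ (the crude lower bound $n!\ge (n/e)^n$ suffices). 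This gives
$$
f^{\mu(s)} \ge \left(\frac{a_sb_s}{e(a_s+b_s)}\right)^{a_sb_s}.
$$
So it remains to show that the base $\dfrac{a_sb_s}{e(a_s+b_s)}$ exceeds $\beta$ for all large $s$.

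The key step is the observation that $\dfrac{a_sb_s}{a_s+b_s} = \dfrac{1}{\frac{1}{a_s}+\frac{1}{b_s}} \to \infty$ as $s\to\infty$, since both $a_s\to\infty$ and $b_s\to\infty$ by hypothesis (so $\frac1{a_s}+\frac1{b_s}\to 0$). Hence for any fixed $\beta>0$ there is $s_0$ such that for all $s\ge s_0$ we have $\dfrac{a_sb_s}{a_s+b_s} > e\beta$, which yields $\dfrac{a_sb_s}{e(a_s+b_s)} > \beta$ and therefore $f^{\mu(s)} > \beta^{a_sb_s}$, as desired.

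I do not expect a serious obstacle here; the only mild care needed is in choosing the crude form of Stirling's estimate so that the error factors do not interfere — using $n!\ge (n/e)^n$ (valid for all $n\ge 1$) keeps the argument clean, and the bound $h_{ij}\le a_s+b_s$ is deliberately wasteful but more than enough. One could sharpen the constant by keeping the $\sqrt{2\pi n}$ factor and using $h_{ij}\le a_s+b_s-1$, but since $\beta>0$ is arbitrary and the divergence of $\frac{a_sb_s}{a_s+b_s}$ is the real engine, no sharpening is required. The main conceptual point to highlight is that the rectangular shape is large along both axes, so its dimension grows faster than any exponential in the number of cells — this is exactly what drives the later results about diagonals tending to infinity.
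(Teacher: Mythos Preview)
Your argument is correct. Both proofs start from the hook length formula for the rectangle $(b^a)$ and finish with Stirling, but the intermediate estimates differ. The paper first evaluates the hook product exactly as $\frac{n!}{(b!)^a}\prod_{t=1}^{a-1}\binom{b+t}{t}^{-1}$ (after assuming $b\ge a$), then bounds each binomial by $2^{b+t}$ and applies Stirling to $\frac{n!}{(b!)^a}$, arriving at $f^\mu>(a/4)^n$. You instead bypass the exact product entirely by bounding every hook length by the maximum $a+b-1$, obtaining $f^\mu\ge\bigl(\frac{ab}{e(a+b)}\bigr)^{ab}$, and then use that the harmonic-type quantity $\frac{ab}{a+b}\to\infty$. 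Your route is shorter and avoids the WLOG reduction and the binomial bound; the paper's route yields an explicit base depending only on $\min(a,b)$, which is marginally sharper (for $b\ge a$ your bound gives at least $(a/(2e))^n$ versus the paper's $(a/4)^n$), but since any $\beta$ is allowed this makes no difference for the stated lemma.
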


\begin{proof}
Let $n=ab$ and $\mu=\mu(s)=(b^a)\vdash n$.
Since $f^\mu=f^{\mu'},$ where $\mu'$ is the conjugate partition of $\mu$,
we assume, as we may, that $b\ge a$.

By the hook formula we get
$$
f^\mu=
\frac {n!}{(b!)^a}\cdot \frac {1!}{(b+1)} \cdot \frac {2!}{(b+1)(b+2)}\cdot \,
\cdots \,\cdot\frac {(a-1)!}{(b+1)\dots (b+a-1)}=
$$
$$
=\frac {n!}{(b!)^a}\cdot \prod_{t=1}^{a-1}{{b+t}\choose t}^{-1}>
\frac {n!}{(b!)^a}\cdot \prod_{t=1}^{a-1}\left(2^{b+t}\right)^{-1}=
\frac {n!}{(b!)^a}\cdot 2^{-n-\frac {a^2}{2}+\frac {2b+a}{2}}
 \frac {n!}{(b!)^a}2^{-2n}.
$$
since $a^2\le n.$ By Stirling formula, since $n=ab$ we get that
$f^\mu > \left( \frac a 4 \right)^n.$

Now, since $\lim_{s\rightarrow \infty }a_s = \infty ,$ for every
$\beta$ there exists $s_0$ such that $\frac {a_s}{4}\ge \beta,$ for any $s\ge s_0$
and the result follows.
\end{proof}

\begin{proposition}\label{prop 2}
Let $\alpha >1,$ $\alpha\in \mathbb{R},$ and let $\{\lambda^{(n)}\}_{n\ge 1}$ be a sequence
of partitions, $\lambda^{(n)} \vdash n.$ Suppose that for any $\varepsilon >0$,
$\frac{\delta(\lambda^{(n)})^2}{n}\ge \varepsilon >0$ holds for $n$ large enough.
Then for any $\gamma$ there exists $n_0$ such that $f^{\lambda^{(n)}}\ge \gamma^n$  for every $n \ge n_0.$
\end{proposition}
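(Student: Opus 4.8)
The plan is to reduce to Lemma \ref{lemma 2} by locating, inside each $\lambda^{(n)}$, a large rectangular subdiagram whose side lengths both grow without bound, and then to use the monotonicity of hook numbers under passing to a subdiagram to bound $f^{\lambda^{(n)}}$ from below. Concretely, set $\delta_n = \delta(\lambda^{(n)})$. By hypothesis $\delta_n^2 \ge \varepsilon n$ for $n$ large, so $\delta_n \to \infty$. By the very definition of the diagonal, the square diagram $(\delta_n^{\delta_n})$ sits inside $\lambda^{(n)}$; that is, $\lambda^{(n)}_i \ge \delta_n$ for $1 \le i \le \delta_n$. I would then take $a_n = b_n = \delta_n$ (or a suitable sub-rectangle, see below) and aim to compare $f^{\lambda^{(n)}}$ with $f^{\mu(n)}$ where $\mu(n) = (\delta_n^{\delta_n})$.

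The key inequality is that adding cells to a diagram can only increase the degree relative to the growth rate; more precisely, if $\mu \subseteq \lambda$ with $|\mu| = k$ and $|\lambda| = n$, then by the hook formula $f^\lambda = n!/\prod_{(i,j)\in\lambda} h_{ij}(\lambda)$ and $f^\mu = k!/\prod_{(i,j)\in\mu} h_{ij}(\mu)$. For the cells of $\mu$ we have $h_{ij}(\lambda) \le n$ trivially, but more usefully one shows $\prod_{(i,j)\in \lambda} h_{ij}(\lambda) \le n^{n-k} \prod_{(i,j)\in\mu} h_{ij}(\mu) \cdot (\text{something controllable})$ — the cleanest route is to use Remark \ref{r1}: fill $\lambda$ with a standard tableau extending a standard tableau of $\mu$ on the sub-square, so that $\prod h_{ij}(\lambda) \le \prod N_{ij} \le n^{n-k}\prod_{(i,j)\in\mu} t_{ij}$-type bound, giving $f^{\lambda^{(n)}} \ge f^{\mu(n)} \cdot (\text{polynomial or mild exponential correction})$. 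Combined with Lemma \ref{lemma 2}, which gives $f^{\mu(n)} > \beta^{\delta_n^2}$ for any fixed $\beta$ once $n$ is large, and with $\delta_n^2 \ge \varepsilon n$, we obtain $f^{\lambda^{(n)}} \ge \beta^{\varepsilon n}$ up to corrections; choosing $\beta$ large relative to $\gamma^{1/\varepsilon}$ then yields $f^{\lambda^{(n)}} \ge \gamma^n$ for $n \ge n_0$.

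The main obstacle is controlling the contribution of the cells of $\lambda^{(n)}$ that lie outside the square $\mu(n)$: a priori there are $n - \delta_n^2$ of them and each can contribute a hook number as large as $n$, which would swamp the estimate. The fix is that hook numbers of cells outside the diagonal square are bounded by the relevant row or column lengths, and one should instead peel off from $\lambda^{(n)}$ a maximal rectangle $(b^a) \subseteq \lambda^{(n)}$ with $a,b \to \infty$ and $ab$ comparable to $n$ — which exists precisely because $\delta_n^2 \ge \varepsilon n$ forces either many long rows or many long columns. I would argue: if the square $(\delta_n^{\delta_n})$ already has $\delta_n^2 \ge \varepsilon n$, then taking $\mu(n)=(\delta_n^{\delta_n})$ and applying the hook-formula comparison, the "extra" $n - \delta_n^2 \le (1-\varepsilon)n$ cells each have hook number $\le n$, so they cost at most a factor $n^{(1-\varepsilon)n}$, while Stirling in Lemma \ref{lemma 2} already produced a factor $(\delta_n/4)^{\delta_n^2} \ge (\delta_n/4)^{\varepsilon n}$, and since $\delta_n \to \infty$ this outpaces $n^{(1-\varepsilon)n}$ — wait, it does not, so one genuinely needs the sharper bound $h_{ij}(\lambda) \le \lambda_i$ (resp. $\le \lambda'_j$) for cells in row $i$ (resp. tall columns) together with $n!/\prod(\text{row lengths})!$ and Stirling, exactly as in the proof of Proposition \ref{prop 3}. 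So the real plan is: combine the subdiagram $\mu(n)$ argument of Lemma \ref{lemma 2} with the row/column hook-length bookkeeping of Proposition \ref{prop 3}, splitting $\lambda^{(n)}$ into the diagonal square plus "arm" and "leg" strips, bounding the strip hooks by factorials of the truncated row/column lengths, and finishing with Stirling; the delicate point is checking that the leftover factorials, divided into $n!$, still leave a base tending to infinity, which holds because the dominant rectangle has $\varepsilon n$ cells.
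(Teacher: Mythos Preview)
You have correctly identified the square subdiagram $\mu(n)=(\delta_n^{\delta_n})\subseteq\lambda^{(n)}$ and the relevance of Lemma~\ref{lemma 2}, but you are missing the one-line fact that makes the whole ``main obstacle'' disappear: if $\mu\subseteq\lambda$ then $f^{\mu}\le f^{\lambda}$. This is immediate from the branching rule (restricting the irreducible $S_n$-module $V^{\lambda}$ down to $S_{n-1}$ contains each $V^{\nu}$ with $\nu$ obtained by deleting a corner box, so $f^{\lambda}\ge f^{\nu}$; now iterate), or equivalently from the observation that every standard tableau of shape $\mu$ extends to a standard tableau of shape $\lambda$. There is therefore nothing to ``control'' about the cells of $\lambda^{(n)}$ outside the square: the inequality $f^{\lambda^{(n)}}\ge f^{\mu(n)}$ holds on the nose, with no correction factors of any kind.

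With this in hand the paper's proof is four lines: set $k_n=\delta_n^2$ and $\beta=\gamma^{1/\varepsilon}$; since $\delta_n\to\infty$, Lemma~\ref{lemma 2} gives $f^{\mu(n)}\ge\beta^{k_n}$ for large $n$; since $k_n\ge\varepsilon n$ this is $\ge\beta^{\varepsilon n}=\gamma^n$; and $f^{\lambda^{(n)}}\ge f^{\mu(n)}$ finishes it. Your proposed detour through hook-by-hook bookkeeping, row/column factorials, and Stirling (in the style of Proposition~\ref{prop 3}) is not wrong in spirit, but it is entirely unnecessary here, and the version you sketch is not actually carried to a conclusion---you yourself note midway that the crude bound $n^{(1-\varepsilon)n}$ overwhelms $(\delta_n/4)^{\varepsilon n}$. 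The fix is not a sharper hook estimate; it is to abandon the hook comparison altogether and use the branching inequality directly.
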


\begin{proof}
For every $n\ge 1$ set $\mu(n)=\left( \delta(\lambda^{(n)}\right)^{\delta(\lambda^{(n)})}\vdash k_n$
where $k_n = \delta(\lambda^{(n)})^2.$ Take $\beta=\gamma^{1/\varepsilon}.$
Then since $ \frac {k_n}{n}\ge \varepsilon$,
by Lemma \ref{lemma 2} there exists $n_0$ such that
$$
f^{\mu(n)}\ge \beta^{k_n}> \beta^{\varepsilon n}= \gamma^n,
$$
for any $n\ge n_0$.
Since $f^{\lambda^{(n)}}\ge f^{\mu(n)},$ the proof is complete.
\end{proof}

\section{The main results}

The following lemma which is of interest by itself,
 is the main tool for proving our main result.
 If $a\in \mathbb{R}$, we write $[a]$ for the integer part of $a$.

\begin{lemma}\label{lemma 1}
Let $\alpha >1,$ $\alpha\in \mathbb{R}$.
Let $\lambda \vdash n$ be such that
$\delta=\delta(\lambda)\ge 9\alpha$ and $\lambda_1,\lambda'_1\le \frac n \alpha.$
If $\lambda_1>\lambda_2>\dots >\lambda_\delta>\delta$ and
$\lambda'_1>\lambda'_2 >\dots > \lambda'_{\tau +1} \ge \delta$, for some $0\le \tau\le \delta$, then
there exists $n_0$ such that for all $n\ge n_0$
$$
f^\lambda \ge \alpha^{n-(\delta^2+\alpha\rho)},
$$
where $\rho=\delta^2$ if $\alpha\in \mathbb{N}$ and
$\rho=\left[ \frac{\delta^2}{\alpha-[\alpha ]} \right]+1$ if $\alpha \not\in \mathbb{N}.$
\end{lemma}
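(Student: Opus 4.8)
The plan is to produce an explicit standard tableau (or rather a lower bound on the number of standard tableaux via the hook formula) in which all the "large" hook numbers are bounded by factorials of pieces summing to roughly $n$, exactly as in the proof of Proposition \ref{prop 3}. Write $f^\lambda = n!/\prod_{i,j} h_{ij}$. The strategy is to split the cells of $D_\lambda$ into the $\delta\times\delta$ square $Q=\{(i,j): i,j\le\delta\}$ and the rest. For cells outside $Q$ we want to argue that, column by column (for $j\le\delta$, rows $i>\delta$) and row by row (for $i\le\tau$ or so, columns $j>\delta$), the hook number $h_{ij}$ is at most $(\text{something})-(\text{offset})+1$, so that the product of hook numbers over each arm/leg telescopes into a factorial. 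The hypotheses $\lambda_1>\lambda_2>\dots>\lambda_\delta>\delta$ and $\lambda_1'>\dots>\lambda_{\tau+1}'\ge\delta$ are precisely what guarantees that the diagram is "staircase-like" near the diagonal so that these hooks behave well and the pieces one extracts are genuinely the row-lengths and column-lengths (truncated by $\delta$).

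First I would set, analogously to Proposition \ref{prop 3}, $a_i = \lambda_i'-\delta$ for the part of column $i$ below the square (for $1\le i\le \delta$), and $b_i = \lambda_i - \delta$ for the part of row $i$ to the right of the square (for those rows $i$ with $\lambda_i>\delta$; by hypothesis these are $i=1,\dots,\delta$ at least, and the columns $j>\delta$ are handled by the rows $i\le\tau$). I would check $\sum a_i + \sum b_i = n - \delta^2$. Then for a cell $(i,j)$ with $i>\delta$, $j\le\delta$: its leg is $\lambda_j'-i$ and its arm is $\lambda_i-j$; because $\lambda_i\le\delta$ for $i>\delta$ (since $\delta$ is the diagonal) the arm is $\le\delta-j$, so $h_{ij}\le(\lambda_j'-i)+(\delta-j)+1$. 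That is not immediately a clean telescoping bound, so the genuinely delicate point is to organize the bookkeeping so that, after also bounding the $\le\delta^2$ hook numbers inside $Q$ crudely by $n$ each (contributing the $n^{\delta^2}$-type polynomial factor, which later gets absorbed) and the hooks just outside $Q$ carefully, one obtains
$$
f^\lambda \ge \frac{1}{n^{\delta^2}}\cdot\frac{n!}{\prod_i a_i!\,\prod_i b_i!}.
$$
Then Stirling, together with $a_i,b_i\le\lambda_1',\lambda_1\le n/\alpha$, yields $n!/(\prod a_i!\prod b_i!)\ge \alpha^{\sum a_i+\sum b_i}=\alpha^{n-\delta^2}$, so $f^\lambda\ge\alpha^{n-\delta^2}/n^{\delta^2}$. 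Finally I would absorb the polynomial factor $n^{\delta^2}$ into the exponential: since $\alpha>1$, $n^{\delta^2}\le\alpha^{\alpha\rho}$ for $n$ large — this is where the strange-looking $\rho$ enters, chosen so that $\alpha^{\alpha\rho}$ (equivalently $\rho\log\alpha$) dominates $\delta^2\log n$; the case split on whether $\alpha\in\mathbb N$ is a device to get a clean closed form for how much of the exponent one must spend. This gives $f^\lambda\ge\alpha^{n-(\delta^2+\alpha\rho)}$ for $n\ge n_0$.

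The main obstacle, as in Proposition \ref{prop 3}, is the careful hook-number accounting outside the square: one must verify that for the column-cells $(i,j)$ with $i>\delta$ the hook numbers, read from the bottom of column $j$ upward, are bounded by $1,2,\dots,a_j$ in some order (so their product is $\le a_j!$), and symmetrically for the row-cells using the staircase hypothesis on the $\lambda_i'$; the hypotheses $\delta\ge 9\alpha$ and the strict inequalities $\lambda_1>\cdots>\lambda_\delta>\delta$, $\lambda_1'>\cdots>\lambda_{\tau+1}'\ge\delta$ are exactly the technical hooks that make this bookkeeping go through without overlap or double-counting of cells, and I expect the bulk of the proof to be this verification, with the Stirling estimate and the polynomial-absorption being routine.
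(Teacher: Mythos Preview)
Your final absorption step is where the argument breaks. You want $n^{\delta^2}\le\alpha^{\alpha\rho}$ ``for $n$ large'', but with $\rho=\delta^2$ (the case $\alpha\in\mathbb N$) this reads $\delta^2\log n\le\alpha\delta^2\log\alpha$, i.e.\ $n\le\alpha^{\alpha}$, which is a \emph{smallness} condition on $n$; the non-integer case gives $n\le\alpha^{\alpha/(\alpha-[\alpha])}$, equally bad. Nothing in the hypotheses forces $\delta$ to grow with $n$: the constraints $\lambda_1,\lambda_1'\le n/\alpha$ and $\delta\ge 9\alpha$ allow $\delta$ to sit at $\lceil 9\alpha\rceil$ while $n\to\infty$, so a polynomial loss $n^{\delta^2}$ genuinely cannot be traded for $\alpha^{\alpha\rho}$. (Your hook bookkeeping outside the Durfee square is also not as clean as you hope: a cell $(i,j)$ with $i>\delta$, $j\le\delta$ can have arm length up to $\delta-j$, so the product over column $j$ is not bounded by $a_j!$ in general.) The quantity $\rho$ is \emph{not} there to absorb a polynomial factor in $n$.

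The paper's proof avoids any polynomial factor by a completely different mechanism. Rather than bounding $\prod h_{ij}$ by a product of factorials and applying Stirling, it assigns to each cell a number $N\in\{1,\dots,n\}$ via an iterated corner-peeling procedure and proves the \emph{pointwise} inequality $N\ge\alpha h_N$ for every cell outside an exceptional set $T_4$ with $|T_4|\le\delta^2+\alpha\rho$; on $T_4$ one only uses $N\ge h_N$ (Remark~\ref{r1}). Then
\[
f^\lambda=\prod_{N=1}^{n}\frac{N}{h_N}\ge\alpha^{\,n-|T_4|}\cdot 1^{|T_4|}\ge\alpha^{\,n-(\delta^2+\alpha\rho)}.
\]
The strict-inequality hypotheses on the first $\delta$ rows and $\tau+1$ columns guarantee that each peeling round removes many corners (at least $\delta,\delta-1,\dots$ initially, then $\ge 2\alpha$, then $\ge\alpha$ outside the Durfee square), which is exactly what drives $N\ge\alpha h_N$. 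The parameter $\rho$ measures how far beyond the Durfee square one must go before the direct estimate $h_N\le n/\alpha-x$ (using $\lambda_1,\lambda_1'\le n/\alpha$) takes over; the cells trapped inside $(\delta+\rho)^{\delta+\rho}$ that were never peeled form $T_4$. So the role of the hypotheses is as you guessed, but the comparison is $N$ versus $h_N$ cell by cell, not factorials versus Stirling, and this distinction is what makes the argument go through uniformly in $n$.
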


\begin{proof}
We shall assign a number $N$ to each cell of $\lambda,$ $1\le N \le n$, and
we shall denote by $h_N$ the corresponding hook number.

We shall split the cells of $\lambda$ into four eventually empty disjoint
 sets $T_1,\ldots, T_4$.We start by defining cells of type 1.

Let $s_1$ be the number of corner cells of $\lambda=\lambda^{(1)}.$
By assumption $s_1\ge \delta \ge 9\alpha > 2\alpha.$ We enumerate the corner cells
from top to bottom with the numbers $1, 2, \dots , s_1$ and we assign to each cell color 1.

Next we consider the new diagram $\lambda^{(2)}$ obtained by deleting the $s_1$ corner cells from $\lambda.$
Let $s_2$ be the number of corner cells of $\lambda^{(2)}.$
Since $s_2\ge \delta -1 > 2\alpha,$ we repeat the above procedure by enumerating the corner cells
of $\lambda^{(2)}$ from top to bottom with the numbers $s_1+1,  \dots , s_1+s_2$
and we assign color 2 to each cell.

We repeat this procedure until when the obtained uncolored subdiagram of $\lambda$
has $\ge 2\alpha$ corner cells. Let $1,2,\dots ,r$ be the assigned colors.
The cells obtained in this procedure will be called of type 1.
Hence $T_1= \lambda \setminus \lambda^{(r+1)}$ and $|T_1|=\sum_{i=1}^r s_i.$

We remark that if we consider a cell of type 1, then the cells in the corresponding hook
(whose corner is the given cell) are all colored and each color appears at most twice.
Hence if we consider a cell in the $i$-th step of the above procedure, the corresponding
hook number is $h_N\le 2(i-1)+1$ where $N$ is the number assigned to this given cell.
Hence, since $s_1\ge \delta$ and by hypothesis $s_j\ge 2\alpha,$ for all $j$, $2\le j\le r,$
by counting the number of cells up to the $(i-1)$-th step, we get
$N> 2\alpha(i-2)+ \delta.$ Since $\delta > \alpha$ it follows that
\begin{equation}\label{1}
\alpha h_N\le N.
\end{equation}

Next we claim that
\begin{equation}\label{2}
|T_1|\ge 2\alpha r+ \alpha\delta.
\end{equation}

In fact since the first $\delta$ rows of $\lambda$ have different lengths, we have
$$
s_1\ge \delta, \  s_2\ge \delta -1, \dots , s_i\ge \delta - i+1,\dots ,
$$
$$
s_{\delta-2[\alpha]-1}\ge 2[\alpha]+2>2\alpha, \
s_{\delta-2[\alpha]}\ge 2\alpha,\dots , s_r\ge 2\alpha.
$$

Since
$$
s_1\ge (2[\alpha]+2)+(\delta-2[\alpha]-2),\ s_2\ge (2[\alpha]+2)+(\delta-2[\alpha]-3),\dots ,
$$
$$
s_{\delta-2[\alpha]-2}\ge (2[\alpha]+2)+1, \
s_{\delta-2[\alpha]-1}\ge 2[\alpha]+2,
$$
we have
$$
|T_1|\ge 2\alpha r+ \sum_{j=1}^{\delta-2[\alpha]-2} j =
2\alpha r +\frac{(\delta-2[\alpha]-2)(\delta-2[\alpha]-1)}{2}.
$$

Recalling that $\delta \ge 9\alpha$ and $\alpha \ge [\alpha] \ge 1$,  we get
$$
\delta^2-4[\alpha]\delta+4[\alpha]^2
+6[\alpha] -3\delta +2-2\alpha\delta > 9\alpha\delta -4[\alpha]\delta-3\delta -2\alpha\delta\ge 0.
$$
Hence
$$
\frac{(\delta-2[\alpha]-2)(\delta-2[\alpha]-1)}{2}> \alpha\delta
$$
and the claim follows.

We continue the process of deleting cells from our diagram $\lambda$
and we define the set $T_2$ of cells of type 2.

Consider the diagram $\lambda^{(r+1)}$ and its corner cells $(i,j)$
where either $i>\delta$ or $j>\delta,$ i.e., we consider corner cells
outside the square diagram $\delta\times\delta .$

Let $t_1$ be the number of corner cells of  $\lambda^{(r+1)}$ outside the partition
$\left( \delta^\delta \right).$
If $t_1\ge \alpha,$ we enumerate these
cells with the numbers $|T_1|+1, \dots , |T_1|+t_1$ and we color them with color $r+1.$

Next we consider the diagram $\lambda^{(r+2)}.$ If the number $t_2$ of
corner cells outside $\left( \delta^\delta \right)$ is $\ge \alpha,$ we enumerate
these cells with the numbers $|T_1|+t_1+1, \dots , |T_1|+t_1+t_2$ and
we color them with color $r+2.$

We repeat this procedure as long as the number of corner cells
outside $\left( \delta^\delta \right)$ is $\ge \alpha$.
 Let $r+1, \dots , r+q$ be the
colors given to the cells of type 2. The number of cells of type 2
is $|T_2|=\sum_{i=1}^{q}t_i.$

Next we claim that the inequality (\ref{1}) holds for cells of type 2, i.e., for any $N$ with
$$
|T_1|+1\le N \le |T_1|+|T_2|.
$$

In fact, let $0\le x \le q-1$ and consider the corner cells of the diagram
$\lambda^{(r+x+1)}$ outside $\left( \delta^\delta \right).$ For any such cell $(i, j)$
whose number is $N$ we have that
$$
|T_1|< N \le |T_1|+t_1, \,\,\,\,\, \mbox{if}\,\,\,\, x=0
$$
and
$$
|T_1|+\sum_{k=1}^{x} t_k < N\le |T_1|+\sum_{k=1}^{x+1} t_k,\,\,\,\,\, \mbox{if}\,\,\,\, x>0.
$$

Hence
\begin{equation}\label{3}
N\ge |T_1|+\alpha x
\end{equation}
since $t_k\ge \alpha,$ $1\le k\le q.$

Next we compute $h_N.$ If $i\le \delta$ and $j>\delta,$ each cell of
the arm has different colors. Hence its length is $\lambda_i-j\le r+x.$
Since $j>\delta,$ the length of the leg is $\lambda'_j-i\le\delta-1.$
Therefore
\begin{equation}\label{4}
h_N\le r+x+\delta.
\end{equation}
By (\ref{2}) putting together (\ref{3}) and (\ref{4}) we get (\ref{1}).

In case $i>\delta$ and $j\le \delta,$ the length of the leg is $\le r+x$
and the length of the arm is $\le \delta-1$ and the result still holds.

Next we define the set $T_3$ of cells of type 3. We consider the partition
$\mu=\lambda^{(r+q+1)}.$

Notice that $\mu_\delta+\mu'_\delta< \alpha-1,$ if $\alpha\in \mathbb{N}$
and  $\mu_\delta+\mu'_\delta\le [\alpha]$ if $\alpha\not\in \mathbb{N}.$
In fact, otherwise $\mu$ will have $\ge \alpha$ corner cells
since the original partition has distinct lengths of rows and columns,
and, so,  we would be in type 2.

Cells of type 3 are defined as cells of
$\mu \setminus  \left( (\delta+\rho)^{\delta+\rho} \right).$
We enumerate cells of type 3 as follows. For $\delta+\rho< m\le \max \{\mu_1,\mu'_1 \}=k$
define
$$
Q_m=\{(i,j)\in \mu \mid i=m \ \mbox{or}\ j=m\}.
$$
First we enumerate the cells of $Q_k$
starting with $|T_1|+|T_2|+1$ in some order. Then we enumerated
 the cells of $Q_{k-1}$ in some order starting with $|T_1|+|T_2|+|Q_k|+1.$
 We continue this process until $Q_{\delta+\rho+1}.$

Consider a cell $(i, j)$ of type 3 whose number is $N.$ Then either
$(i,j)=(i, x+\delta),$ $1\le i\le \delta,$ or $(i,j)=(x+\delta ,j ),$ $1\le j\le \delta,$
where $x>\rho.$ From the enumeration of the cells of type 3, it follows that there are $\le \delta^2+cx$
cells whose number is larger than N, where $c=\alpha-1$ if  $\alpha\in \mathbb{N}$
and $c=[\alpha]$ if $\alpha\not\in \mathbb{N}.$ Hence $N\ge n-(\delta^2+cx).$
Also
$$
h_N\le \left(\frac n \alpha -x-\delta \right)+\delta = \frac n \alpha -x.
$$

If $\alpha\in \mathbb{N},$ then $c=\alpha-1$ and we have
$$
N\ge n-\delta^2-(\alpha-1)x = n-\alpha x +x-\delta^2 > \alpha h_N,
$$
since $x>\rho=\delta^2.$

If $\alpha\not\in \mathbb{N},$ then $c=[\alpha]$ and we have
$$
N\ge n-\delta^2-[\alpha]x = n-\alpha x +(\alpha-[\alpha])x-\delta^2 > \alpha h_N,
$$
since $x>\rho = \left[ \frac{\delta^2}{\alpha-[\alpha ]} \right]+1> \frac{\delta^2}{\alpha-[\alpha ]}.$

We have proved that the inequality (\ref{1}) holds for cells of type 3.

Finally we say that a cell of $\lambda$ is of type 4 if it is not of type 1,2 or 3.
Let $T_4$ be
the set of cells of type 4. So $|T_4|=n-|T_1|-|T_2|-|T_3|.$ Note that
\begin{equation}\label{6}
|T_4|\le \delta^2+\alpha\rho.
\end{equation}

We consider a standard tableau $T_\lambda=(t_{ij})$ of shape $\lambda$
such that $n-|T_4|+1\le t_{ij} \le n,$ for any cell $(i, j)$ of type 4. By Remark \ref{r1}
if $N=N_{ij},$ then
\begin{equation}\label{5}
h_{ij}=h_N\le N,
\end{equation}
for any cell $(i, j)$ of type 4.

We are now ready to compute a lower bound of $f^\lambda.$

We have
$$
f^\lambda= \frac {n!}{\prod h_{ij}}=
\frac {(n-|T_4|)!}{\prod_{(i, j)\in T_1\cup T_2\cup T_3}h_{ij}}\cdot \frac {n(n-1)\dots (n-|T_4|+1)}
{\prod_{(i, j)\in T_4}h_{ij}}=
$$
$$
= \prod_{N=1}^{|T_1|+|T_2|+|T_3|}\frac{N}{h_N} \cdot \prod_{N=|T_1|+|T_2|+|T_3|+1}^{n}\frac{N}{h_N}\ge
\alpha^{n-|T_4|}\ge \alpha^{n - (\delta^2+\alpha\rho)},
$$
where we have applied the inequalities (\ref{1}) for cells of type 1, 2 and 3 and
(\ref{5}) for cells of type 4. The last inequality follows from (\ref{6}).

This completes the proof of Lemma \ref{lemma 1}.
\end{proof}

\begin{proposition}\label{prop 1}
Let $\lambda \vdash n$ and $\alpha\in \mathbb{R},$ $\alpha >1.$ Suppose that
$\lambda_1,\lambda'_1\le \frac n \alpha$ and $\delta=\delta(\lambda)\ge 18\alpha.$
Then
$$
f^\lambda \ge \alpha^{n-(\frac 5 2 \delta^2+\alpha\rho)},
$$
where $\rho=\delta^2$ if $\alpha\in \mathbb{N}$ and
$\rho=\left[ \frac{\delta^2}{\alpha-[\alpha ]} \right]+1$ if $\alpha \not\in \mathbb{N}.$
\end{proposition}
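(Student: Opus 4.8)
The plan is to reduce Proposition \ref{prop 1} to Lemma \ref{lemma 1} by a preliminary ``trimming'' of the diagram $\lambda$ so that the hypotheses of the Lemma become satisfied, while only a controlled number of cells are removed. Recall that Lemma \ref{lemma 1} requires the first $\delta$ rows to be strictly decreasing and to exceed $\delta$, and the first $\tau+1$ columns to be strictly decreasing with $\lambda'_{\tau+1}\ge\delta$; a general $\lambda$ with $\delta(\lambda)=\delta$ need not have this staircase-like behaviour near the diagonal. So first I would pass from $\lambda$ to a subpartition $\nu\subseteq\lambda$ obtained by deleting enough cells from the ``block'' sitting around the $\delta\times\delta$ square to force the strict-decrease conditions, then apply the Lemma to $\nu$ (after checking $\delta(\nu)$ has not dropped too much and that $\nu_1,\nu'_1\le |\nu|/\alpha$ still holds), and finally use the monotonicity $f^\lambda\ge f^\nu$ together with multiplication by the branching factors for the restored cells.

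More concretely, the key steps in order would be: (1) Identify the cells that obstruct the staircase hypothesis — these are the cells $(i,j)$ with $i,j$ near $\delta$ where either two consecutive rows have equal length or a row of index $\le\delta$ has length $\le\delta$ (and the column-analogue). Show that all such obstructing cells lie inside a region of size at most $c\delta^2$ for an explicit small constant $c$; the bound $\delta\ge 18\alpha$ is presumably what makes $c$ come out so that $\tfrac52\delta^2$ absorbs everything. (2) Remove these cells to obtain $\nu\vdash n'$ with $n'\ge n - c\delta^2$, arrange that $\nu$ satisfies $\nu_1>\nu_2>\cdots>\nu_{\delta'}>\delta'$ and the dual condition with $\delta'=\delta(\nu)\ge 9\alpha$ (the hypothesis $\delta\ge 18\alpha$ leaves room for $\delta'$ to stay above $9\alpha$ after trimming), and check $\nu_1\le\nu'_1\le n/\alpha$, which is automatic since $\nu\subseteq\lambda$ gives $\nu_1\le\lambda_1\le n/\alpha$ and similarly for columns. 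Actually one must be slightly careful: Lemma \ref{lemma 1} wants $\nu_1,\nu'_1\le n'/\alpha$, so I would instead ensure the stronger $\lambda_1,\lambda_1'\le n'/\alpha$ holds, or reprove the needed inequality directly, since $n'$ is only a bit smaller than $n$ and the slack in the Stirling estimate should cover the difference. (3) Apply Lemma \ref{lemma 1} to $\nu$: $f^\nu\ge \alpha^{n'-(\delta'^2+\alpha\rho')}\ge \alpha^{n'-(\delta^2+\alpha\rho)}$ using $\delta'\le\delta$ and the corresponding bound $\rho'\le\rho$. (4) Combine: $f^\lambda\ge f^\nu\ge \alpha^{n - c\delta^2 - (\delta^2+\alpha\rho)}$, and since $c\delta^2+\delta^2\le\tfrac52\delta^2$ the conclusion follows.

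I expect the main obstacle to be Step (1): producing an honest, explicit bound of the form $c\delta^2$ on the number of cells one must delete to straighten out the diagram near the diagonal, and in particular verifying that after the deletion the diagonal $\delta(\nu)$ is still $\ge 9\alpha$ and the resulting partition really does have all of its first $\delta(\nu)$ rows strictly decreasing and $>\delta(\nu)$. Part of the subtlety is bookkeeping: cells deleted to fix the rows interact with cells deleted to fix the columns, and one should not double-count but also must not under-count. A second, more minor obstacle is the $n'$ versus $n$ discrepancy in the hypothesis $\nu_1\le n'/\alpha$ of the Lemma; since $\lambda_1\le n/\alpha$ only gives $\lambda_1\le n/\alpha$, I would either verify directly that $\lambda_1\le n'/\alpha$ holds under $\delta\ge 18\alpha$ (because $\lambda_1\le n/\alpha$ and $n-n'\le c\delta^2$ is small relative to the gap forced by having a large diagonal), or restructure the trimming so that it removes cells only from rows and columns far from the longest ones, keeping $\nu_1$ and $\nu_1'$ unchanged. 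Once Step (1) is pinned down, Steps (2)--(4) are routine applications of $f^\lambda\ge f^\nu$ for $\nu\subseteq\lambda$ and the already-proved Lemma \ref{lemma 1}.
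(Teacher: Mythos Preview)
Your proposal is correct and follows essentially the same route as the paper: the paper also trims $\lambda$ in two explicit stages (first replacing $\lambda_i$ by $\lambda_i-(i-1)$ and $\lambda'_j$ by $\lambda'_j-(j-1)$ for $i,j\le\delta$, then carving a staircase below row $s+1$), obtaining $\mu\subseteq\lambda$ with $\delta(\mu)\ge\lfloor\delta/2\rfloor+1\ge 9\alpha$ and at most $\tfrac32\delta^2$ cells removed, applies Lemma~\ref{lemma 1} to $\mu$, and concludes via $f^\lambda\ge f^\mu$. Your worry about whether $\nu_1\le n'/\alpha$ still holds is well placed---the paper in fact glosses over precisely this point when invoking the Lemma for $\mu\vdash n_2$.
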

\begin{proof}
We may clearly assume that $n> \delta^2.$ We shall modify $\lambda$
so that we can apply Lemma \ref{lemma 1}.

Define $\widetilde \lambda$ as follows:
\begin{enumerate}
\item[1)]
for $1\le i \le \delta$ such that $\lambda_i\ge \delta+i-1,$
define $\widetilde\lambda_i=\lambda_i-(i-1);$ otherwise set $\widetilde \lambda_i=\delta;$

\item[2)]
for
$1\le j \le \delta,$ if $\lambda'_j\ge \delta+j-1,$ set
$\widetilde \lambda'_j=\lambda'_j-(j-1);$ otherwise set $\widetilde \lambda'_j=\delta.$
\end{enumerate}

Note that we erase at most $\delta(\delta-1)$ cells. So $\widetilde \lambda\vdash n_1\ge n-\delta^2+\delta.$

Let $s$ be the largest integer such that $\widetilde \lambda_s>\delta,$ otherwise set $s=0$
if $\widetilde \lambda_i\le \delta$, for all $i$, $1 \le i \le \delta$.
Let also $t$ be the largest integer such that
$\widetilde \lambda'_t>\delta,$ otherwise set $t=0$ if $\widetilde \lambda'_i\le \delta$,
 for all $i$, $1 \le i \le \delta$.

By eventually  considering the conjugate partition, we may assume that $s\ge t.$
Since $n>\delta^2,$ then $s\ge 1.$

If $s=\delta,$ then set $\mu=\widetilde\lambda$.
Otherwise we define a new partition
$\mu\vdash n_2=n_1-\frac{(\delta-s-1)(\delta-s)}{2}$ as follows:
\begin{enumerate}
\item[1)]
$\mu_i=\widetilde\lambda_i,$ if $1\le i \le s+1,$
or
$\delta+1\le i,$
\item[2)]
$\mu_{s+2}=\delta-1, \dots ,$
$\mu_\delta=\delta-(\delta-s-1).$
\end{enumerate}
Notice that the largest square inside $\mu$ is $\left( \delta(\mu)^{\delta(\mu)}\right)$
where $\delta(\mu)\ge \left[ \frac{\delta(\lambda)}2\right]+1\ge 9\alpha.$
Next we shall apply Lemma \ref{2} for the partition $\mu\vdash n_2$.
Let $\rho(\mu)=\delta(\mu)^2$  if $\alpha\in \mathbb{N}$ and
$\rho(\mu)=\left[ \frac{\delta(\mu)^2}{\alpha-[\alpha ]} \right]+1$ if $\alpha \not\in \mathbb{N}.$

Since $\rho(\mu)\le \rho = \rho(\lambda)$ and $s\le \delta$, by Lemma \ref{2} we have
$$
f^\mu \ge \alpha^{n_2 -(\delta(\mu)^2 +\alpha \rho(\mu))}
$$
$$
\ge \alpha^{n_2 -(\delta(\lambda)^2 +\alpha \rho(\lambda))}
= \alpha^{n - \delta^2+ \delta -\frac{(\delta -s-1)(\delta - s)}{2} -\delta^2- \alpha\rho}
$$
$$
\ge \alpha^{ n- \frac{5\delta^2-(2\delta s-s^2)-(3\delta-s)}{2} -\alpha \rho}
\ge \alpha^{n-(\frac 5 2 \delta^2+\alpha\rho)}.
$$
Since $f^\lambda \ge f^\mu$ the proof is complete.
\end{proof}

\begin{theorem}
Let $\alpha\in \mathbb{R},$ $\alpha >1$ and let $\{\lambda^{(n)}\}_{n\ge 1}$ be a sequence
of partitions, $\lambda^{(n)} \vdash n,$ such that $\lambda^{(n)}_1, {\lambda^{(n)}_1}'\le \frac n \alpha .$
Then for any $1<\beta < \alpha,$ there exists $n_0$ such that for all $n \ge n_0,$
$f^{\lambda^{(n)}}\ge \beta^n.$
\end{theorem}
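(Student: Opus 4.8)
The plan is to fix $1<\beta<\alpha$ once and for all and to treat each index $n$ according to the size of $\delta_n:=\delta(\lambda^{(n)})$, which is the only parameter of $\lambda^{(n)}$ not yet under control. Two preliminary observations are needed. First, since $\delta(\lambda)$ equals the size of the largest square contained in $\lambda$, the inequality $\delta(\lambda)=d<18\alpha$ forces $\lambda_{d+1}\le d$, hence $\lambda_{D+1}\le D$ with $D=[18\alpha]$ (by monotonicity of the rows), so that $\lambda$ lies in the \emph{fixed} hook $H(D,D)$. Second, for $\delta\ge 18\alpha$ Proposition \ref{prop 1} may be rewritten as $f^\lambda\ge\alpha^{\,n-C\delta^2}$, where $C=C(\alpha)$ is a constant absorbing the $\tfrac52$ and the factor $\alpha\rho/\delta^2$ (bounded because $\rho=\delta^2$ or $\rho\le\delta^2/(\alpha-[\alpha])+1$ and $\delta\ge 18$). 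Now set
\[
\varepsilon:=\frac{\ln(\alpha/\beta)}{C\ln\alpha}\in\Bigl(0,\tfrac1C\Bigr),
\]
so that $0<C\varepsilon<1$ and, by a one-line computation, $\alpha^{\,n-C\varepsilon n}=\beta^{\,n}$.

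With these in hand I would split, for each $n$, into three exhaustive cases. (i) If $\delta_n<18\alpha$, then $\lambda^{(n)}\in H(D,D)$ and Proposition \ref{prop 3} with $k=l=D$ gives $f^{\lambda^{(n)}}\ge\alpha^n/n^m$ with $m=(3D-1)D/2$ fixed; since $(\alpha/\beta)^n/n^m\to\infty$ this is $\ge\beta^n$ for all $n$ past a threshold depending only on $\alpha,\beta$. (ii) If $\delta_n\ge 18\alpha$ and $\delta_n^2\le\varepsilon n$, then Proposition \ref{prop 1} gives
\[
f^{\lambda^{(n)}}\ge\alpha^{\,n-C\delta_n^2}\ge\alpha^{\,n-C\varepsilon n}=\beta^{\,n}.
\]
(iii) If $\delta_n^2>\varepsilon n$, then the square partition $(\delta_n^{\delta_n})$ is contained in $\lambda^{(n)}$, so $f^{\lambda^{(n)}}\ge f^{(\delta_n^{\delta_n})}$; moreover $\delta_n>\sqrt{\varepsilon n}\to\infty$, so the estimate $f^{(\delta^{\delta})}>(\delta/4)^{\delta^2}$ established inside the proof of Lemma \ref{lemma 2} gives, once $\delta_n\ge 4\beta^{1/\varepsilon}$ (i.e. once $n\ge 16\beta^{2/\varepsilon}/\varepsilon$), the bound $f^{(\delta_n^{\delta_n})}>(\beta^{1/\varepsilon})^{\delta_n^2}=\beta^{\,\delta_n^2/\varepsilon}>\beta^{\,n}$, where the last step uses $\delta_n^2/\varepsilon>n$. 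Taking $n_0$ to be the maximum of the two thresholds appearing in (i) and (iii) completes the proof.

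The three cases are genuinely exhaustive for every single $n$, so no gluing is required, and --- this is worth emphasizing --- the thresholds in (i) and (iii) depend only on $\alpha$ and $\beta$ (through $m$, $C$, $\varepsilon$), not on the particular sequence, so their maximum is a legitimate choice of $n_0$. I expect the main work to be purely bookkeeping: pinning down a clean constant $C$ in the rewriting of Proposition \ref{prop 1}, and checking in case (ii) that the choice $\delta_n^2\le\varepsilon n$ keeps the exponent $n-C\delta_n^2$ a fixed positive fraction of $n$ (namely $\ge(1-C\varepsilon)n$) rather than drifting towards $0$. No idea beyond Propositions \ref{prop 3} and \ref{prop 1} and Lemma \ref{lemma 2} seems to be required.
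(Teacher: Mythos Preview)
Your proposal is correct and follows essentially the same three-case split as the paper: small diagonal (handled via Proposition~\ref{prop 3} in the fixed hook $H(D,D)$), large diagonal with $\delta_n^2\le\varepsilon n$ (handled by Proposition~\ref{prop 1}), and $\delta_n^2>\varepsilon n$ (handled by the square estimate inside Lemma~\ref{lemma 2}); these are exactly the paper's sets $M_1$, $M_3$, $M_2$, with your $C\varepsilon$ playing the role of the paper's $\gamma$. The only differences are cosmetic---you absorb $\tfrac52\delta^2+\alpha\rho$ into a single $C\delta^2$ and invoke the bound $f^{(\delta^\delta)}>(\delta/4)^{\delta^2}$ directly rather than passing through Proposition~\ref{prop 2}---so the logic and thresholds coincide.
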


\begin{proof}
In order to simplify the notation we write $\delta(\lambda^{(n)})=\delta(n).$
Let $\beta\in \mathbb{R}$ be such that $1<\beta < \alpha,$ and let
\begin{equation} \label{7}
0< \gamma \le \frac{\ln \alpha - \ln \beta}{\ln \alpha}.
\end{equation}
We partition $\mathbb{N}$ into three disjoint sets $\mathbb{N}=M_1\cup M_2\cup M_3,$
 where the $M_i$'s are defined as follows:
$$
M_1=\{ n\in \mathbb{N} \mid\delta(n) < 18\alpha\},
$$
$$
M_2=\{ n\in \mathbb{N} \mid \delta(n) \ge 18\alpha, \
\gamma n \le \frac{5}{2} \delta(n)^2 + \alpha \rho (n)\},
$$
$$
M_3=\{n\in \mathbb{N} \mid \delta(n) \ge 18\alpha,  \
\gamma n > \frac{5}{2} \delta(n)^2 + \alpha \rho (n)\}.
$$

By Proposition \ref{prop 3} there exists $n_1$ such that for all $n\ge n_1, n\in M_1,$
we have that $f^{\lambda^{(n)}}\ge \beta^n$.
In fact $f^{\lambda^{(n)}}\ge \frac {\alpha^n}{n^m}$
where $m=\frac{(3\delta(n)-1)\delta(n)}{2}< \frac{3\delta(n)^2}{2}< 486 \alpha^2.$

Suppose now that $n\in M_2.$ Then $\gamma n \le \frac{5}{2} \delta(n)^2 + \alpha \rho (n)$
and suppose first that $\alpha\in \mathbb{N}.$ Then in this case $\rho (n)=\delta(n)^2$
and $\gamma n \le \frac{5}{2} \delta(n)^2 + \alpha \rho (n)$ implies that
$\gamma n \le (\frac{5}{2} + \alpha )\delta(n)^2.$ Thus $\frac{\delta(n)^2}{n}\ge
\gamma(\frac{5}{2} + \alpha)^{-1} = \varepsilon > 0.$ By Proposition \ref{prop 2} there exists
$n_2$ such that for all $n\ge n_2$, $n\in M_2$  we have that
$f^{\lambda^{(n)}}\ge \beta^n.$

Suppose now that $\alpha\not\in \mathbb{N}.$ Then
$$
\gamma n \le \frac{5}{2} \delta(n)^2 + \alpha \left( \left[\frac{\delta(n)^2}{\alpha-[\alpha]}\right]+1 \right)
\le \frac{5}{2} \delta(n)^2 + \alpha \cdot \frac{\delta(n)^2}{\alpha-[\alpha]}+\alpha \le
$$
$$
\le \delta(n)^2\left(3 + \frac{\alpha}{\alpha-[\alpha]}\right),
$$
since $\frac12 \delta(n)^2> \alpha.$ Thus $\frac{\delta(n)^2}{n}\ge
\gamma(3 + \frac{\alpha}{\alpha-[\alpha]})^{-1} = \varepsilon > 0.$
By Proposition \ref{prop 2} as above we get
$f^{\lambda^{(n)}}\ge \beta^n$, for any $n\ge n_2, n\in M_2.$

By Proposition \ref{prop 1} there exists $n_3$ such that for all $n\ge n_3, n\in M_3,$
$$
f^{\lambda^{(n)}}\ge \alpha^{n-\gamma n}\ge \beta^n,
$$
since from (\ref{7}) we have that $(n-\gamma n)\ln \alpha \ge \ln \beta.$

If we now take $n_0=max \{ n_1, n_2, n_3\},$ we get that $f^{\lambda^{(n)}}\ge \beta^n$ for all $n\ge n_0.$
This completes the proof of the theorem.
\end{proof}

\end{document}